\newtheorem{theorem}{Theorem}[section]
\newtheorem{lemma}[theorem]{Lemma}
\newtheorem{e-proposition}[theorem]{Proposition}
\newtheorem{corollary}[theorem]{Corollary}
\newtheorem{e-definition}[theorem]{Definition\rm}
\newtheorem{remark}{\it Remark\/}
\newcommand{\Ga}{\Gamma}
\newcommand{\Om}{\Omega}
\newcommand{\ld}{\lambda}
\newcommand{\Ld}{\Lambda}
\newcommand{\bR}{\mathbb{R}}
\newcommand{\bN}{\mathbb{N}}
\newcommand{\wt}[1]{\widetilde{#1}}
\newcommand{\D}[2]{\partial_{ #2} #1}
\newcommand{\dm}{{\delta}}
\def\og{\leavevmode\raise.3ex\hbox{$\scriptscriptstyle\langle\!\langle$~}}
\def\fg{\leavevmode\raise.3ex\hbox{~$\!\scriptscriptstyle\,\rangle\!\rangle$}}
\title{On the asymptotics of a Robin eigenvalue problem}
\author{Fioralba Cakoni\footnote{Department of Mathematical Sciences, University of Delaware, USA (cakoni@math.udel.edu)} \and
Nicolas Chaulet\footnote{Department of Mathematics, University College London,  UK (n.chaulet@ucl.ac.uk)} \and
Houssem Haddar\footnote{CMAP, Ecole Polytechnique, Palaiseau, France (houssem.haddar@inria.fr)}}
\begin{document}

\maketitle
\selectlanguage{english}

\begin{abstract}
\selectlanguage{english}
The considered Robin problem can formally be seen as a small perturbation
of a Dirichlet problem. However, due to the sign of the impedance value, its associated eigenvalues converge point-wise to $-\infty$
as the perturbation goes to zero. We prove that in this case, Dirichlet
eigenpairs are the only accumulation points of the Robin eigenpairs  with
normalized eigenvectors. We then  provide a criteria to select  accumulating 
sequences of eigenvalues and eigenvectors and exhibit their full asymptotic
with respect to the small parameter.
\end{abstract}

\vskip 0.5\baselineskip

\selectlanguage{english}

\section{A Robin eigenvalue problem with negative sign}
We are interested in the asymptotic behavior of the eigenvalues $\ld^\delta$
and eigenfunctions $u^\delta \in H^1(\Omega)$ of the following problem:
\begin{eqnarray}
\Delta u^\delta +\lambda^\delta u^\delta =0 & \quad \mbox{in}\; \Omega&\label{eig1}\\
\partial_\nu u^\delta-\frac{1}{\delta}u^\delta=0 & \quad \mbox{on}\;\Gamma\label{eig}
\end{eqnarray}
with respect to  $\delta>0$ as it approaches $0$, where $\nu$ is the outward unit normal vector to $\Gamma$ which is the $C^2$-smooth boundary of
the bounded connected  domain $\Omega\subset \bR^d$ for $d\geq2$.

The eigenvalue problem with Robin boundary condition described by
\eqref{eig1}-\eqref{eig} naturally appear in a number of models related to reaction
diffusion problems (see \cite{LaOcSa98}) or scattering theory. For the
latter, this eigenvalue problem can be seen as a first  approximation to the
interior transmission eigenvalue problem associated with the scattering problem
by a perfectly conducting body coated with a dielectric layer of width $\delta$ (see
\cite[chapter 8]{Cha12}). It can also be seen as an approximate model to direct
scattering problems for perfect conductors coated with metamaterials.

  It is well-known that problem \eqref{eig1}-\eqref{eig}  has an infinite
 sequence of  real eigenvalues $\{\lambda_i^\delta\}_{i=1}^{\infty}$
 accumulating at $+\infty$. However, for sufficiently small $\delta$  some
 eigenvalues become negative and their number grows to $+\infty$ as $\delta\to 0$.
 In fact,  for at least $C^1$ smooth boundary $\Gamma$,  it is known (see for
 instance \cite{DaKe10,LevPar08}) that for every (fixed) $i\geq 1$, $
-\delta^2 \lambda_i^\delta \to 1$ as $\delta \to 0$.  

  In Section \ref{sec:2fam}, we complement this result by indicating that Dirichlet eigenvalues for the $-\Delta$
 operator in $\Om$ are the only possible finite accumulation points of  
 $\ld^\delta$ (extending this way the result obtained in \cite{LaOcSa98} for
 simple geometries)  if the associated $H^1$ normalized eigenfunctions do not $L^2$ converge to 0  as $\delta$ goes to. We also prove that 
 eigenvectors associated with other accumulation points  concentrate
 at the boundary (in the sense that they converge to zero in any compact set of
 $\Omega$). Our main result is given in Section 3 which stipulates that some
 $\ld^\delta$  does accumulate at Dirichlet eigenvalues providing a full
 asymptotic development of these sequences as $\delta$ goes to zero. 

\section{Accumulation pairs for  Robin eigenpairs}
\label{sec:2fam}
We recall that \eqref{eig1}-\eqref{eig} are equivalent to the following variational formulation
\begin{equation}
\label{eigvf}
\int_\Om \nabla u^\dm \nabla v \, dx - \frac{1}{\dm}\int_\Ga u^\dm v\, ds =
\ld^\dm \int_\Om u^\dm v\,dx \quad \forall \, v \in H^1(\Omega).
\end{equation}
\begin{lemma}
\label{le:ldbounded}
Assume that a sequence 
$(\ld^\dm,u^\dm) \in \bR \times H^1(\Om)$  satisfying  \eqref{eigvf}  
is such that $\|u^\dm\|_{H^1(\Om)} =1$ and $|\ld^\dm| \leq C$ for some $C>0$
independent of the $\dm$. Then one can extract a subsequence $\dm'$ of $\dm$ such
that $
\ld^{\dm'} \rightarrow \Ld_0$ and  $\|u^{\dm'} - U_0\|_{L^2(\Om)} \rightarrow 0
$ as $\dm'\to 0$, 
where if $U_0\neq 0$ then $(\Ld_0,U_0)$ is some Dirichlet eigenpair for $-\Delta$ in $\Omega$.
\end{lemma}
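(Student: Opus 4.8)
The plan is to combine weak-$H^1$ compactness with a single energy identity that tames the singular boundary term. Since $\|u^\dm\|_{H^1(\Om)}=1$, the family $(u^\dm)$ is bounded in $H^1(\Om)$; because $\Om$ is bounded with $C^2$ boundary, Rellich's theorem and the compactness of the trace operator $H^1(\Om)\to L^2(\Ga)$ let me extract a subsequence $\dm'$ along which $u^{\dm'}\rightharpoonup U_0$ weakly in $H^1(\Om)$, $u^{\dm'}\to U_0$ strongly in $L^2(\Om)$, and the traces converge, $u^{\dm'}|_\Ga\to U_0|_\Ga$ in $L^2(\Ga)$. Using $|\ld^\dm|\le C$ I extract further so that $\ld^{\dm'}\to\Ld_0$. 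This already provides the claimed $L^2$ convergence of the eigenvectors; it remains to identify the limit.

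The crux is to show that $U_0$ has vanishing trace, i.e. $U_0\in H^1_0(\Om)$. I would obtain this from the energy identity produced by choosing $v=u^\dm$ in \eqref{eigvf}, namely
\begin{equation*}
\frac{1}{\dm}\int_\Ga |u^\dm|^2\,ds = \int_\Om|\nabla u^\dm|^2\,dx - \ld^\dm\int_\Om|u^\dm|^2\,dx.
\end{equation*}
The right-hand side is bounded uniformly in $\dm$, since $\int_\Om|\nabla u^\dm|^2\,dx\le\|u^\dm\|_{H^1(\Om)}^2=1$ and $|\ld^\dm|\int_\Om|u^\dm|^2\,dx\le C$. Hence $\int_\Ga|u^\dm|^2\,ds\le (1+C)\,\dm\to 0$, so the traces tend to zero in $L^2(\Ga)$. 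Comparing with the strong trace convergence of the previous step forces $U_0|_\Ga=0$, i.e. $U_0\in H^1_0(\Om)$. This is the only delicate point: the factor $-1/\dm$ is singular and cannot be controlled by the sign alone, but testing against $u^\dm$ itself converts it into a quantity dominated by the a priori estimates.

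Finally I pass to the limit in the eigenvalue equation. For any fixed $v\in H^1_0(\Om)\subset H^1(\Om)$ the boundary term in \eqref{eigvf} vanishes because $v|_\Ga=0$, leaving
\begin{equation*}
\int_\Om\nabla u^{\dm'}\nabla v\,dx = \ld^{\dm'}\int_\Om u^{\dm'} v\,dx.
\end{equation*}
The weak $H^1$ convergence handles the left-hand side and the strong $L^2$ convergence together with $\ld^{\dm'}\to\Ld_0$ handles the right-hand side, yielding $\int_\Om\nabla U_0\nabla v\,dx=\Ld_0\int_\Om U_0 v\,dx$ for all $v\in H^1_0(\Om)$. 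This is precisely the weak formulation of $-\Delta U_0=\Ld_0 U_0$ with homogeneous Dirichlet data, so whenever $U_0\neq 0$ the pair $(\Ld_0,U_0)$ is a Dirichlet eigenpair, as claimed.
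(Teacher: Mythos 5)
Your proposal is correct and follows essentially the same route as the paper: extract a weakly convergent subsequence, test \eqref{eigvf} with $v=u^\dm$ to get $\int_\Ga |u^\dm|^2\,ds \le C\dm$ and hence $U_0|_\Ga=0$, then pass to the limit with test functions in $H^1_0(\Om)$. The only difference is that you spell out the trace-compactness step and the energy identity that the paper leaves implicit.
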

\begin{proof}
Since the sequence $\ld^\dm$ is bounded and $u^\dm$ is also bounded in
$H^1(\Om)$, one can extract a subsequence $\dm'$ such that $\ld^{\dm'}$  converges
to some $\Ld_0 \in \bR$ and $u^{\dm'}$ converges weakly in $H^1(\Om)$ and
strongly in $L^2(\Om)$ to some function $U_0 \in H^1(\Om)$ as $\dm'$ goes to
$0$. From \eqref{eigvf} one deduces that 
 $\int_\Ga |u^{\dm'}|^2 \, ds \leq C \dm'$ for some $C>0$ independent of $\dm'$, hence $U_0 =0$ on $\Ga$.  Moreover, taking $v \in H^1_0(\Om)$ in
 \eqref{eigvf} and letting  $\dm' \to 0$ proves that $U_0$ satisfies $
\int_\Om \nabla U_0 \nabla v \, dx = \Ld_0 \int_\Om U_0 v\,dx  $, which proves, if $U_0\neq 0$, that $(\Ld_0,U_0)$ is a Dirichlet eigenpair. 
\end{proof}
\begin{remark}
We remark that  any point on the real axis is a possible accumulation point for $\{\lambda^\delta\}_\delta$. Actually, for a given $i\in \bN$ the sequence $\{\lambda_i^\delta\}_{\delta} $ goes to $-\infty$ continuously. Therefore, for any $\Ld \in \bR$ one can build a sequence $\{\lambda^{\delta_i}\}_{i\in \bN}$ such that $\ld^{\delta_i}=\Ld$ for any $i$.
\end{remark}
\begin{theorem}
\label{th:carac}
Consider a sequence 
$\{\ld^\dm,u^\dm\}_\delta \in \bR \times H^1(\Om)$  satisfying  \eqref{eigvf} 
 such that \\ $\|u^\dm\|_{H^1(\Om)} =1$ and $\ld^{\dm}\le C < +\infty$ for some
constant $C$ independent of $\dm$
and let $ K$ be a non empty open set compactly included  in $\Om$. Then the 
sequence $\{\lambda^\dm\}_\delta$ accumulates  
 Dirichlet eigenvalues if and only if there exists $\eta>0$ and $\delta_0>0$
such that for all $\dm \le \delta_0$, 
\begin{equation}\label{eq:minudelta}
\|u^{\dm}\|_{L^2(K)} \geq \eta.
\end{equation}

\end{theorem}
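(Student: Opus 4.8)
The plan is to prove the two implications separately, reading the statement that $\{\ld^\dm\}_\dm$ \emph{accumulates Dirichlet eigenvalues} as follows: along every subsequence the pair $(\ld^\dm,u^\dm)$ admits a sub-subsequence converging, in the sense of Lemma~\ref{le:ldbounded}, to a \emph{genuine} Dirichlet eigenpair $(\Ld_0,U_0)$ with $U_0\neq0$; equivalently, no subsequence of $\ld^\dm$ escapes to $-\infty$ and no limiting eigenfunction collapses to $0$. The engine throughout is Lemma~\ref{le:ldbounded}, so the two quantities I must control are the validity of its hypothesis $|\ld^\dm|\le C$ and the (non)vanishing of the $L^2$-limit $U_0$.

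For the implication \eqref{eq:minudelta}~$\Rightarrow$~accumulation, I would first upgrade the one-sided bound $\ld^\dm\le C$ into a two-sided one. Fix a cut-off $\chi\in C_c^\infty(\Om)$ with $\chi\equiv1$ on $K$ and test \eqref{eigvf} with $v=\chi^2u^\dm\in H^1_0(\Om)$; since $v$ vanishes near $\Ga$, the sign-indefinite boundary term disappears and one is left with
\[
\int_\Om\chi^2|\nabla u^\dm|^2\,dx+2\int_\Om\chi\,u^\dm\,\nabla\chi\!\cdot\!\nabla u^\dm\,dx=\ld^\dm\int_\Om\chi^2|u^\dm|^2\,dx .
\]
By Young's inequality and $\|u^\dm\|_{H^1(\Om)}=1$ the left-hand side is bounded below by $-\|\nabla\chi\|_\infty$, while \eqref{eq:minudelta} gives $\int_\Om\chi^2|u^\dm|^2\,dx\ge\int_K|u^\dm|^2\,dx\ge\eta^2$, so $\ld^\dm\ge-\|\nabla\chi\|_\infty/\eta^2$. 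This is the step I expect to be the crux: localization away from $\Ga$ is exactly what neutralizes the term $-\tfrac1\dm\int_\Ga|u^\dm|^2\,ds$ responsible for the divergence to $-\infty$. With $|\ld^\dm|$ now uniformly bounded, Lemma~\ref{le:ldbounded} applies along any subsequence, and passing \eqref{eq:minudelta} to the $L^2$-limit yields $\|U_0\|_{L^2(K)}\ge\eta$, hence $U_0\neq0$ and $(\Ld_0,U_0)$ is a Dirichlet eigenpair.

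For the reverse implication I would argue by contraposition. If \eqref{eq:minudelta} fails then $\liminf_\dm\|u^\dm\|_{L^2(K)}=0$, so some subsequence $\dm'$ satisfies $\|u^{\dm'}\|_{L^2(K)}\to0$. If $\{\ld^{\dm'}\}$ is unbounded below, then $-\infty$ is an accumulation value and the sequence does not accumulate Dirichlet eigenvalues. Otherwise $\{\ld^{\dm'}\}$ is bounded, Lemma~\ref{le:ldbounded} extracts a further subsequence with $u^{\dm'}\to U_0$ in $L^2(\Om)$, and therefore $\|U_0\|_{L^2(K)}=0$. Any nontrivial limit would solve $-\Delta U_0=\Ld_0 U_0$ in $\Om$ and thus be real-analytic in the interior, so vanishing on the open set $K$ forces $U_0\equiv0$ by unique continuation. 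Hence this subsequence limits to the trivial function rather than to a Dirichlet eigenpair, which is precisely the failure of accumulation.

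The main obstacle is the lower bound of the second paragraph: one must turn the purely qualitative non-collapse assumption \eqref{eq:minudelta} into a quantitative uniform control of $\ld^\dm$, and the interior cut-off is what makes this possible despite the singular factor $1/\dm$. A secondary, but essential, point is the use of unique continuation (equivalently analyticity of Laplace eigenfunctions) in the converse to promote $\|U_0\|_{L^2(K)}=0$ to $U_0\equiv0$; without it one could not exclude a nonzero eigenfunction that happens to vanish on $K$.
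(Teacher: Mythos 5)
Your proof is correct and follows essentially the same route as the paper: the forward implication is the identical interior cut-off test $v=\psi^2u^{\delta}$, which kills the singular boundary term, yields a uniform lower bound on $\lambda^{\delta}$ via Young's inequality and \eqref{eq:minudelta}, and then hands the conclusion to Lemma~\ref{le:ldbounded}. Your converse is simply the contrapositive of the paper's argument; the only cosmetic differences are that you make explicit the unique-continuation/analyticity step behind the paper's unproved assertion that $\|u\|_{L^2(K)}>0$ for every Dirichlet eigenfunction, while the paper instead packages the uniformity of $\eta$ through the finite-dimensionality of the finitely many eigenspaces below the level $C$.
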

\begin{proof}
First, let us assume that \eqref{eq:minudelta} holds. 
Then take $\psi \in C^\infty_0(\Om)$ such that $\psi =1$ in
$K$ and choose $v=\psi^2u^\delta$ in  \eqref{eigvf}. 
By developing $\nabla(\psi^2u^{\delta} )$ and using Young's inequality we obtain
\[
0\leq\frac{3}{4}\int_\Om \psi^2 |\nabla u^{\delta}|^2\,dx \leq \ld^{\delta} \int_\Om \psi^2 u^2_{\delta} \, dx + 4  \int_\Om |\nabla \psi|^2 u^2_{\delta} \, dx.
\]
This implies 
$\ld^\dm \geq - 4 \| \nabla \psi\|^2_{L^\infty(\Om)} /{\eta}$. Then by Lemma
\ref{le:ldbounded} we obtain that accumulation points are Dirichlet
eigenvalues since by \eqref{eq:minudelta}, any subsequence of $u^{\delta}$
cannot converge to $0$ in $L^2(\Omega)$. 

Conversely, if  $\{\lambda^\dm\}_\delta$
accumulates at Dirichlet eigenvalues, the number of these accumulation point is
finite. Then by  Lemma \ref{le:ldbounded}, and since eigenspaces have finite
dimensions and $\|u\|_{L^2(K)} >0$ for all eigenfunctions, accumulation points of
$\|u^\dm\|_{L^2(K)}$ are finite discrete positive numbers. This proves \eqref{eq:minudelta}.  
\end{proof}
Lemma \ref{le:ldbounded} and Theorem \ref{th:carac} prove in particular that accumulating points for
Robin eigenpairs $(\ld^\dm,u^\dm) \in \bR \times H^1(\Om)$ such that $\|u^\dm\|_{H^1(\Om)} =1$
are only Dirichlet eigenpairs.

\section{Asymptotic of Robin eigenvalues accumulating at Dirichlet eigenvalues}
\label{sec:asym}
First, it is easy to check that $(\lambda^\delta, u^\delta)$ is a solution of \eqref{eig1}-\eqref{eig} if and only if $\displaystyle{\mu^\delta=\lambda^\delta+\frac{\alpha}{\delta^2}}$ for some positive constant $\alpha>0$  and $u^\delta\in H^1(\Omega)$ 
solve
\begin{equation}
\int_\Omega\nabla u^\delta\nabla v\,dx-\frac{1}{\delta}\int_\Gamma u^\delta v\,ds+\frac{\alpha}{\delta^2}\int_\Omega u^\delta v\,dx =\mu^\delta\int_\Omega u^\delta 
v\,dx \qquad \mbox{for all} \;\; v\in H^1(\Omega).
\label{alpha}
\end{equation}
 In the space of $H^1(\Omega)$-functions let us introduce the $\delta$-dependence norm 
$\|u\|^2_{H^1_\delta(\Omega)}:=\|\nabla u\|^2_{L^2(\Omega)}+\frac{1}{\delta^2}\|u\|^2_{L^2(\Omega)}$. We can prove  a coercivity result for the variational 
formulation \eqref{alpha} in $H^1_\delta(\Omega)$ thanks to the following Lemma which is obtained by using the inequality $\|u\|^2_{L^2(\Ga)} \leq C(\|\nabla u\|_{L^2(\Om)}\|u\|_{L^2(\Om)}+\|u\|^2_{L^2(\Om)})$.
\begin{lemma}\label{para}
There exist positive constants $\alpha$, $\theta$ and $\delta_0$ such that for all $\delta\leq \delta_0$
\begin{equation*}
 \int_\Omega|\nabla u|^2\,dx-\frac{1}{\delta}\int_\Gamma |u|^2\,ds+\frac{\alpha}{\delta^2}\int_\Omega |u|^2\,dx\geq \theta \left(\int_{\Omega}|\nabla u|^2\,dx+\frac{1}
{\delta^2}\int_\Omega |u|^2\,dx\right) \; \forall u \in H^1(\Omega).
\end{equation*}
\end{lemma}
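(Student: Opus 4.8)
The plan is to estimate the boundary term by means of the stated multiplicative trace inequality and then to absorb the resulting contributions into the gradient and $L^2$ parts of the right-hand side through a carefully calibrated Young's inequality. The crucial point throughout is the way the singular factor $1/\delta$ in front of the boundary integral has to be distributed.

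First I would apply the trace inequality to $\frac{1}{\delta}\int_\Gamma |u|^2\,ds$, obtaining
\[
\frac{1}{\delta}\|u\|^2_{L^2(\Omega)}\text{-type bound:}\qquad
\frac{1}{\delta}\|u\|^2_{L^2(\Gamma)} \leq \frac{C}{\delta}\|\nabla u\|_{L^2(\Omega)}\|u\|_{L^2(\Omega)} + \frac{C}{\delta}\|u\|^2_{L^2(\Omega)}.
\]
The last term I would simply rewrite as $C\delta\cdot\frac{1}{\delta^2}\|u\|^2_{L^2(\Omega)}$, so that for $\delta$ bounded it is controlled by a small multiple of $\frac{1}{\delta^2}\|u\|^2_{L^2(\Omega)}$.

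The key step is the treatment of the cross term. Here I would write $\frac{C}{\delta}\|\nabla u\|\,\|u\| = C\,\|\nabla u\|\cdot\frac{\|u\|}{\delta}$ and apply Young's inequality with a parameter $\varepsilon>0$ to the product $\|\nabla u\|\cdot\frac{\|u\|}{\delta}$, which yields $\frac{\varepsilon}{2}\|\nabla u\|^2 + \frac{1}{2\varepsilon}\frac{\|u\|^2}{\delta^2}$. This is precisely where the required $1/\delta^2$ weight is produced: by pairing the whole factor $1/\delta$ with $\|u\|$ rather than leaving it in front of the product. Substituting back into the left-hand side functional, I would obtain a lower bound of the form
\[
\Big(1-\tfrac{C\varepsilon}{2}\Big)\|\nabla u\|^2_{L^2(\Omega)} + \Big(\alpha-\tfrac{C}{2\varepsilon}-C\delta\Big)\frac{\|u\|^2_{L^2(\Omega)}}{\delta^2}.
\]

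It then remains to fix the constants in the right order: first choose $\varepsilon$ small enough (depending only on $C$) that $1-\frac{C\varepsilon}{2}\geq\frac12$; then choose $\delta_0$ and $\alpha$ large enough that $\alpha-\frac{C}{2\varepsilon}-C\delta_0\geq\frac12$ for all $\delta\leq\delta_0$. With these choices the claimed inequality holds with $\theta=\frac12$. The only genuine subtlety — what I would regard as the main obstacle — is exactly this bookkeeping of the singular factor $1/\delta$: a naive split via Young's inequality would leave a term scaling like $1/\delta$ in front of $\|u\|^2_{L^2(\Omega)}$, which could not be absorbed by the $\frac{1}{\delta^2}$ term on the right-hand side. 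The validity of the trace inequality itself is standard, being a consequence of the $C^2$ regularity of $\Gamma$.
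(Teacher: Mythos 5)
Your proof is correct and follows exactly the route the paper indicates: the paper gives no detailed argument, only the remark that the lemma "is obtained by using the inequality $\|u\|^2_{L^2(\Gamma)} \leq C(\|\nabla u\|_{L^2(\Omega)}\|u\|_{L^2(\Omega)}+\|u\|^2_{L^2(\Omega)})$," and your write-up supplies precisely the Young's-inequality bookkeeping (pairing the full factor $1/\delta$ with $\|u\|_{L^2(\Omega)}$) needed to make that remark into a proof. No gaps.
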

Note that from Lemma \ref{para} we also have that
Problem (\ref{alpha}) can be written as a generalized eigenvalue problem
$(A^\delta u, v)_{H^1_\delta(\Omega)}=\mu^\delta(B^\delta u, v)_{H^1_\delta(\Omega)}$ where the bounded  linear operators $A^\delta:H^1_
\delta(\Omega)\to H^1_\delta(\Omega)$ and $B^\delta:H^1_\delta(\Omega)\to H^1_\delta(\Omega)$ are defined by
$$\left(A^\delta u, v\right)_{H^1_\delta(\Omega)}:=\int_\Omega\nabla u \nabla v \,dx-\frac{1}{\delta}\int_\Gamma u v\,ds+\frac{\alpha}{\delta^2}\int_\Omega u v\,dx 
\quad \text{and} \quad \left(B^\delta u, v\right)_{H^1_\delta(\Omega)}:=\int_\Omega u v\,dx$$ 

for all $u, v \in H^1(\Om)$. The operator $A^\delta:H^1_\delta(\Omega)\to H^1_\delta(\Omega)$ is self-adjoint,  coercive with coercivity constant independent of $
\delta$ from Lemma \ref{para} and  it satisfies $\|A^\delta\|\leq C$ with a constant  $C>0$ independent of $\delta$, whereas the operator $B^\delta:H^1_
\delta(\Omega)\to H^1_\delta(\Omega)$ is self-adjoint and compact. Hence, it is known that there exists a sequence of $\mu^\delta_k>0$, $k=0,\ldots,+ \infty$ 
accumulating  to infinity such that $1/\mu^\delta_k$ are the eigenvalues of the compact self-adjoint operator $(A^\delta)^{-1/2}B^\delta(A^\delta)^{-1/2}$ and the $\mu^\delta_k$ are eigenvalues of (\ref{alpha}).

\subsection{Formal asymptotic of the positive eigenvalues}
Let us take  $(\ld^\delta,u^\delta)$ a solution to \eqref{eig1}-\eqref{eig} and  introduce the ansatz $U^\delta_N:= \sum_{k=0}^N \delta^k u_k$ for  $u^\delta$  and $
\Ld_N^\delta := \sum_{k=0}^N \delta^k \ld_k$ for $\ld^\delta$. Plugging these two expressions into \eqref{eig1}-\eqref{eig} enable us to compute all the terms in the expansions 
explicitly by equating the same powers of $\delta$.  To this end, this process first  defines  $\ld_0$ as being an eigenvalue of $-\Delta$ with Dirichlet boundary conditions in the domain $\Om$ with corresponding eigenvector  $u_0$  normalized  such that $\|u_0\|_{L^2(\Om)}=1$. Let us assume that $\ld_0$ is simple, otherwise, the definition of the higher order terms in the expansion of $\ld^\delta$ is much more involved. Next, for some $k>0$ let us assume that  $u_p$ and $\ld_p$ for $p<k$ are known. Then, the function $u_k\in H^1(\Om)$ must be a solution to 
\[
\Delta u_k +\ld_0 u_k = -\sum_{p=0}^{k-1}\ld_{k-p} u_p \text{ in } \Om, \quad  u_k = \D{u_{k-1}}{\nu} \text{ on } \Ga \quad \text{and} \quad \int_\Om u_k u_0\,dx =0, 
\]
where the latter is the compatibility condition that guaranties  the existence of $u_k$. Here, we use the convention that the terms with negative indices are $0$. The compatibility condition determines the value of $\ld_k
$ to $\ld_k:=\int_\Ga\D{u_{k-1}}{\nu}\D{u_0}{\nu}$ and $u_k$ is uniquely defined and for every $k$ there exists $C>0$ such that $\|u_k\|
_{H^2(\Om)}\leq C \|u_0\|_{H^1(\Om)}$.
In addition, for all $v\in H^1(\Om)$ and $k>0$, $u_k$ satisfies the following variational equality
\begin{equation}
\label{eq:varu1}
\int_\Om \nabla u_k \nabla v dx =\sum_{p=0}^{k}\ld_{k-p} \int_\Om u_pvdx + \int_\Ga \D{u_k}{\nu} vds.
\end{equation}

\subsection{A convergence result}
For any two functions $u, v \in H^1(\Om)$, and for $N>0$, let us denote  by
$$  E_N^\delta(u,v):=(A^\delta u, v)_{H^1_\delta(\Omega)}-\hat \mu_N^\delta(B^\delta u, v)_{H^1_\delta(\Omega)}$$ with $\hat \mu_N^\delta:=\Ld_N^\delta+\alpha/\delta^2$. 
Using equation \eqref{eq:varu1} and the definition of $u_0$ we obtain after some calculations that for $N\geq0$, 
\[
E_N^\delta(U^\delta_N,v)=\delta^{N}\int_\Ga\D{u_N}{\nu} vds +{\sum_{p,k=0, p+k>N}^N} \delta^{p+k}\ld_k \int_\Om u_p v ds.
\]
 Since $u_0$ is uniformly bounded with respect to $\delta$ in $H^2(\Om)$, by
 using the fact that $\|u\|^2_{L^2(\Gamma)}\leq c\delta\|u\|^2_{H^1_\delta(\Omega)}$ and the bounds on the functions $u_k$, we obtain that 
for all $N\geq0$ it exists $C>0$ such that for all $\delta>0$ sufficiently small, $E_N^\delta(U^\delta_N,v)\leq C\delta^{N+1/2}\|v\|_{H^1_\delta(\Omega)} $ for all $v\in 
H^1(\Om)$. Note that  thanks to the normalization  $\| u_0\|_{L^2(\Omega)}=1$ we have that $\|u_0\|_{H^1_\delta(\Om)} \geq \delta^{-1}$ and for $N\geq 0$ there exists $C>0$ such that for all $\delta>0$ sufficiently 
small,  $\|U^\delta_N\|_{H^1_\delta(\Om)} \geq C \delta^{-1}$. Hence setting $\hat{U}^\delta_N : = U^\delta_N/\|U^\delta_N\|_{H^1_\delta(\Om)} $ yields
\begin{equation}
\label{eq:E0}
\left| E_N^\delta(\hat U^\delta_N,v) \right|\leq C\delta^{N+3/2}\|v\|_{H^1_\delta(\Omega)},\;\mbox{for all} \; \; v\in H^1(\Omega).
\end{equation}
Making use of the Lemma 1.1 in Chapter 3 of \cite{OlShYa92} we can prove the following theorem.
\begin{theorem}
Let $N\geq 0$ and $\Ld_N^\delta$ be as above. There exist $C>0$ and $\delta_0
>0$ such that for all $\delta>0$, $\delta < \delta_0$,  there exits an
eigenvalue $\ld^\delta>0$  of  \eqref{eig1}-\eqref{eig} such that $\left|\ld^\delta - \Ld_N^\delta\right|\leq C\delta^{N-1/2}$.
\end{theorem}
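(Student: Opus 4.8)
The plan is to read the statement as an \emph{a posteriori} spectral estimate: the formal expansion has produced a quasimode $(\Ld_N^\delta,U_N^\delta)$ whose residual in the generalized eigenproblem is small by \eqref{eq:E0}, and the conclusion follows by feeding this residual into the Vishik--Lyusternik-type lemma (Lemma~1.1, Ch.~3 of \cite{OlShYa92}). First I would recast \eqref{eq:E0} as an operator residual bound. Since by definition $E_N^\delta(u,v)=(A^\delta u-\hat\mu_N^\delta B^\delta u,\,v)_{H^1_\delta(\Om)}$, taking the supremum over $v$ with $\|v\|_{H^1_\delta(\Om)}=1$ in \eqref{eq:E0} gives
\[
\bigl\|A^\delta \hat U_N^\delta-\hat\mu_N^\delta B^\delta \hat U_N^\delta\bigr\|_{H^1_\delta(\Om)}\le C\delta^{N+3/2}.
\]

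Next I would transfer the estimate to the compact self-adjoint nonnegative operator $\mc K^\delta:=(A^\delta)^{-1/2}B^\delta(A^\delta)^{-1/2}$, whose nonzero eigenvalues are exactly the reciprocals $1/\mu_k^\delta$. Applying the uniformly bounded operator $(A^\delta)^{-1/2}$ (bounded independently of $\delta$ by the coercivity of Lemma~\ref{para}) and setting $w^\delta:=(A^\delta)^{1/2}\hat U_N^\delta$ turns the displayed bound into $\|w^\delta-\hat\mu_N^\delta\mc K^\delta w^\delta\|_{H^1_\delta(\Om)}\le C'\delta^{N+3/2}$. Dividing by $\hat\mu_N^\delta$ and normalizing (note $\|w^\delta\|_{H^1_\delta(\Om)}$ is bounded above and below uniformly in $\delta$, again by Lemma~\ref{para} and $\|A^\delta\|\le C$) yields, with $\tilde w^\delta:=w^\delta/\|w^\delta\|_{H^1_\delta(\Om)}$,
\[
\Bigl\|\mc K^\delta \tilde w^\delta-\tfrac{1}{\hat\mu_N^\delta}\,\tilde w^\delta\Bigr\|_{H^1_\delta(\Om)}\le \frac{C''}{\hat\mu_N^\delta}\,\delta^{N+3/2}.
\]
Because $\hat\mu_N^\delta=\alpha/\delta^2+\Ld_N^\delta\ge c\,\delta^{-2}$ for $\delta$ small, the right-hand side is $\le C'''\delta^{N+7/2}$, and the cited lemma produces an eigenvalue $1/\mu_k^\delta$ of $\mc K^\delta$ with $\bigl|1/\mu_k^\delta-1/\hat\mu_N^\delta\bigr|\le C'''\delta^{N+7/2}$.

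The final step is to invert this and bookkeep the powers of $\delta$. Writing $\bigl|1/\mu_k^\delta-1/\hat\mu_N^\delta\bigr|=|\mu_k^\delta-\hat\mu_N^\delta|/(\mu_k^\delta\hat\mu_N^\delta)$ and noting that $1/\mu_k^\delta$ sits within $\delta^{N+7/2}$ of $1/\hat\mu_N^\delta\sim\delta^2/\alpha$, so $\mu_k^\delta\sim\alpha\delta^{-2}$ and hence $\mu_k^\delta\hat\mu_N^\delta\sim\alpha^2\delta^{-4}$, I obtain $|\mu_k^\delta-\hat\mu_N^\delta|\le C_*\delta^{N+7/2}\delta^{-4}=C_*\delta^{N-1/2}$. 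Setting $\ld^\delta:=\mu_k^\delta-\alpha/\delta^2$ (an eigenvalue of \eqref{eig1}--\eqref{eig}) and using $\Ld_N^\delta=\hat\mu_N^\delta-\alpha/\delta^2$ gives $|\ld^\delta-\Ld_N^\delta|=|\mu_k^\delta-\hat\mu_N^\delta|\le C_*\delta^{N-1/2}$. Positivity of $\ld^\delta$ is then automatic for $\delta$ small: applying the estimate with $N\ge1$ shows the selected eigenvalue lies within $o(1)$ of the Dirichlet eigenvalue $\ld_0>0$, and this same eigenvalue satisfies the weaker $N=0$ bound as well.

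The main obstacle is precisely this inversion. The lemma controls the \emph{reciprocal} eigenvalues $1/\mu_k^\delta$, which cluster near $0$, so returning to the $\mu_k^\delta$ (and thence to $\ld^\delta$) costs the factor $\mu_k^\delta\hat\mu_N^\delta\sim\delta^{-4}$. Two points therefore need care: that the selected eigenvalue $\mu_k^\delta$ is genuinely of order $\delta^{-2}$ (so the conversion factor is no worse than $\delta^{-4}$), which is guaranteed by its proximity to $\hat\mu_N^\delta$; and that all constants arising from $(A^\delta)^{\pm1/2}$, from the lower bound on $\|w^\delta\|_{H^1_\delta(\Om)}$, and from the trace/interpolation inequality underlying \eqref{eq:E0} are uniform in $\delta$ --- which is exactly what Lemma~\ref{para} and the uniform bound $\|A^\delta\|\le C$ supply. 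With these in hand, the residual power $\delta^{N+3/2}$, boosted by the $\delta^2$ gained from $1/\hat\mu_N^\delta$, survives the $\delta^{-4}$ degradation and lands on the advertised $\delta^{N-1/2}$.
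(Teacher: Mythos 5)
Your proposal is correct and follows essentially the same route as the paper: recast \eqref{eq:E0} as a quasimode residual for $(A^\delta)^{-1/2}B^\delta(A^\delta)^{-1/2}$, invoke Lemma~1.1 of Chapter~3 of \cite{OlShYa92} to locate a reciprocal eigenvalue $1/\mu^\delta$ near $1/\hat\mu_N^\delta$, and pay the factor $\mu^\delta\hat\mu_N^\delta\sim\delta^{-4}$ when inverting, which degrades $\delta^{N+7/2}$ to the stated $\delta^{N-1/2}$. You are in fact slightly more explicit than the paper on two points it glosses over --- the conjugation by $(A^\delta)^{\pm1/2}$ with $\delta$-uniform bounds, and the positivity of the selected $\ld^\delta$ --- but the argument is the same.
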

\begin{proof}
Let us define $T^\delta:=(A^\delta)^{-1/2}B^\delta(A^\delta)^{-1/2}$ as an operator from $H^1_\delta(\Om)$ into itself. Then \eqref{eq:E0} becomes 
$\left\|T^\delta \hat U^\delta_N - \hat U^\delta_N/\hat \mu_N^\delta \right\|\leq C\delta ^{N+3/2}/{|\hat \mu_N^\delta|}$. 
From Lemma 1.1 in Chapter 3 of \cite{OlShYa92} we obtain that there exists an eigenvalue $\mu^\delta$ of problem (\ref{alpha}) 
 such that  $\left|1/  \hat \mu_N^\delta-1/{\mu^\delta}\right|\leq  C\delta ^{N+3/2}/{|\hat \mu_N^\delta|}$
and as a consequence $ |\hat \mu_N^\delta - \mu^\delta| \leq C \delta^{N+3/2} |\mu^\delta|$.  Therefore it exists $\wt C>0$ independent of $\delta$ such that $|\mu^
\delta| \leq \wt C |\hat \mu_N^\delta| \leq \wt C(\Ld_N^\delta+\alpha/\delta^2)$ which yields the desired result.
 \end{proof}
 This result is not optimal in terms of the power of $\delta$ but since for all $N\geq0$ the error writes $\ld^\delta-\Ld_N^\delta = \ld^\delta-\Ld^\delta_{N+2}+\delta^{N+1}\ld_{N+1}+
\delta^{N+2}\ld_{N+2}$ we finally obtain the following result.
 \begin{corollary}
 \label{coro:asy}
 Let $N\geq 0$ and $\Ld_N^\delta$ be as above. There exist $C>0$ and $\delta_0
>0$ such that for all $\delta>0$, $\delta < \delta_0$,  there exits an
eigenvalue $\ld^\delta>0$  of  \eqref{eig1}-\eqref{eig} such that $\left|\ld^\delta - \Ld_N^\delta\right|\leq C\delta^{N+1}$.
 \end{corollary}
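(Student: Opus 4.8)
The plan is to bootstrap the preceding theorem, whose error estimate $C\delta^{N-1/2}$ loses three half-powers of $\delta$ relative to the target rate $\delta^{N+1}$, by invoking it at the higher truncation order $N+2$ instead of $N$. Since $N+2\geq 0$ whenever $N\geq 0$, the theorem applies verbatim with this shifted index: there exist constants $C>0$ and $\delta_0>0$ such that for every $\delta<\delta_0$ there is an eigenvalue $\ld^\delta>0$ of \eqref{eig1}-\eqref{eig} with
\[
\left|\ld^\delta-\Ld_{N+2}^\delta\right|\leq C\,\delta^{(N+2)-1/2}=C\,\delta^{N+3/2}.
\]
The key observation is that this same eigenvalue $\ld^\delta$ will serve for the corollary, because $\Ld_{N+2}^\delta$ and $\Ld_N^\delta$ are themselves close to order $\delta^{N+1}$.

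Next I would exploit the fact that the truncated expansions at orders $N$ and $N+2$ differ only by the two explicit terms produced by the formal scheme of Section \ref{sec:asym}, namely $\Ld_{N+2}^\delta-\Ld_N^\delta=\delta^{N+1}\ld_{N+1}+\delta^{N+2}\ld_{N+2}$, so that the algebraic identity already recorded before the statement,
\[
\ld^\delta-\Ld_N^\delta=\left(\ld^\delta-\Ld_{N+2}^\delta\right)+\delta^{N+1}\ld_{N+1}+\delta^{N+2}\ld_{N+2},
\]
holds. The coefficients $\ld_{N+1}$ and $\ld_{N+2}$ are fixed real numbers independent of $\delta$: each $\ld_k=\int_\Ga\D{u_{k-1}}{\nu}\,\D{u_0}{\nu}\,ds$ is finite because the functions $u_k$ are uniquely defined and obey the uniform bound $\|u_k\|_{H^2(\Om)}\leq C\|u_0\|_{H^1(\Om)}$ from the formal construction. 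Applying the triangle inequality to the identity above then gives
\[
\left|\ld^\delta-\Ld_N^\delta\right|\leq C\,\delta^{N+3/2}+|\ld_{N+1}|\,\delta^{N+1}+|\ld_{N+2}|\,\delta^{N+2}.
\]
Shrinking $\delta_0$ so that $\delta_0\leq 1$ if necessary, one has $\delta^{N+3/2}\leq\delta^{N+1}$ and $\delta^{N+2}\leq\delta^{N+1}$ for all $\delta<\delta_0$, whence $\left|\ld^\delta-\Ld_N^\delta\right|\leq C'\delta^{N+1}$ with $C':=C+|\ld_{N+1}|+|\ld_{N+2}|$ independent of $\delta$, which is the claimed bound; note that the dominant contribution is the leading recovered term $\delta^{N+1}\ld_{N+1}$.

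There is no genuine analytical difficulty remaining here: both the existence of the eigenvalue and the quantitative error control were already secured by the preceding theorem, and the corollary follows purely from the order-shifting trick combined with the $\delta$-independence of the expansion coefficients. The only point I would check explicitly is the uniformity of the constant $C$ and the threshold $\delta_0$ furnished by the theorem once the truncation index is fixed at $N+2$ (they are allowed to depend on $N$ but not on $\delta$), together with the finiteness of $\ld_{N+1}$ and $\ld_{N+2}$, which is immediate from the uniform $H^2$ estimates on the profiles $u_k$. Thus the corollary is essentially the remark preceding it, made rigorous by this uniform reading of the theorem.
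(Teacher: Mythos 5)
Your proposal is correct and follows exactly the paper's own argument: the authors likewise apply the theorem at truncation order $N+2$ to get the $C\delta^{N+3/2}$ bound and then use the identity $\ld^\delta-\Ld_N^\delta = \ld^\delta-\Ld^\delta_{N+2}+\delta^{N+1}\ld_{N+1}+\delta^{N+2}\ld_{N+2}$ with the triangle inequality. Your added checks on the finiteness of $\ld_{N+1}$, $\ld_{N+2}$ and the uniformity of the constants are sensible but do not change the route.
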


\section{Acknowledgement}
The authors gratefully acknowledge Leonid Parnovski from University College
London for noticing a flaw that helped us correcting and
clarifying the
statements of Section 2.

\end{document}